\newcommand{\st}{\,\big|\,}
\newcommand{\real}{\mathbb{R}}
\newcommand{\tor}{\mathbb{T}}
\newcommand{\integer}{\mathbb{Z}}
\newcommand{\rational}{\mathbb{Q}}
\newtheorem{theorem}{Theorem}
\newtheorem{lemma}[theorem]{Lemma}
\newtheorem{question}[theorem]{Question}
\begin{document}

\title[ABELIAN SUBGROUPS IN NO CONJUGATE POINTS]{ABELIAN SUBGROUPS OF THE FUNDAMENTAL GROUP OF A SPACE WITH NO CONJUGATE POINTS}

\author[JAMES DIBBLE]{JAMES DIBBLE}
\address{Department of Mathematics, University of Iowa, Iowa City, IA 52242}
\email{james-dibble@uiowa.edu}
\thanks{I'm grateful to Vitali Kapovitch, Michael Kapovich, and Christopher Croke for helpful discussions. This topic arose during a conversation with Vitali Kapovitch, who showed using Corollary 4.3 of \cite{IvanovKapovitch2014} that, for a sufficiently regular Riemannian manifold, the center of its fundamental group must be finitely generated. The proof of Theorem 1 in the second section contains a simplification of my original argument due to an anonymous referee, whose improvement works without any regularity assumptions.}

\subjclass[2010]{Primary 20F65 and 53C20; Secondary 53C22}

\date{}

\begin{abstract}
Each Abelian subgroup of the fundamental group of a compact and locally simply connected $d$-dimensional length space with no conjugate points is isomorphic to $\integer^k$ for some $0 \leq k \leq d$. It follows from this and previously known results that each solvable subgroup of the fundamental group is a Bieberbach group. In the Riemannian setting, this may be proved using a novel property of the asymptotic norm of each Abelian subgroup.
\end{abstract}

\maketitle

\section{Introduction}

A locally simply connected length space $X$ with universal cover $\pi : \hat{X} \to X$ has \textbf{no conjugate points} if any two points in $\hat{X}$ can be joined by a unique geodesic. Let $X$ be a compact and locally simply connected length space with no conjugate points and finite Hausdorff dimension $d$. In the Riemannian case, it has been believed for some time that Abelian subgroups of $\pi_1(X)$ must be finitely generated; for example, this is stated in \cite{CrokeSchroeder1986}, although the argument there contains a gap. It will be shown here that each Abelian subgroup is isomorphic to $\integer^k$ for some $0 \leq k \leq d$.

\begin{theorem}\label{main theorem}
    Each Abelian subgroup of $\pi_1(X)$ is isomorphic to $\integer^k$ for some $0 \leq k \leq d$.
\end{theorem}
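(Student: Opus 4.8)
The plan is to deduce the three features of $A$---torsion-freeness, finite generation, and rank at most $d$---from the two structural consequences of having no conjugate points: that $\hat{X}$ is uniquely geodesic, and that $\pi_1(X)$ acts on it freely, properly discontinuously, and cocompactly by isometries.

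First I would establish torsion-freeness together with a cohomological bound. Since any two points of $\hat{X}$ are joined by a unique geodesic, and these vary continuously (using that $X$, hence $\hat{X}$, is proper), geodesic contraction to a basepoint shows that $\hat{X}$ is contractible. Thus $X$ is aspherical and $\hat{X}$ is a model for $E\pi_1(X)$. Because the topological dimension of a separable metric space is bounded by its Hausdorff dimension, $\hat{X}$ has covering dimension at most $d$, so $\mathrm{cd}(\pi_1(X)) \le d$; in particular $\pi_1(X)$, and hence every subgroup of it, is torsion-free. Monotonicity of cohomological dimension under subgroups gives $\mathrm{cd}(A) \le d$. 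Consequently, once $A$ is known to be finitely generated it is a finitely generated torsion-free Abelian group, i.e. $A \cong \integer^k$, and $k = \mathrm{cd}(\integer^k) = \mathrm{cd}(A) \le d$.

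It remains to prove that $A$ is finitely generated, which is the heart of the matter. Fix $x_0 \in \hat{X}$ and, for $g \in A$, set $\|g\| = \lim_{n \to \infty} d(x_0, g^n x_0)/n$; subadditivity of $n \mapsto d(x_0, g^n x_0)$ makes this limit exist independently of $x_0$, and commutativity makes $\|\cdot\|$ homogeneous and subadditive---an asymptotic seminorm on $A$. Applying the cohomological bound to finitely generated subgroups $\integer^{r_i} \le A$ forces each $r_i \le d$, so $A$ has finite rank $r \le d$ and the seminorm extends to $A \otimes \real \cong \real^r$. The crux is the \emph{novel property of the asymptotic norm} afforded by the absence of conjugate points: I would show that the orbit is undistorted, in the sense that $d(x_0, g x_0) \le C\,\|g\|$ for all $g \in A$ and some constant $C$ (which in particular makes $\|\cdot\|$ positive-definite). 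Granting this, proper discontinuity---which yields $\varepsilon > 0$ with $d(x_0, g x_0) \ge \varepsilon$ for every $g \in A \setminus \{1\}$---gives $\|g\| \ge \varepsilon/C$ for all nontrivial $g$, so $A$ is a discrete, full-rank subgroup of the finite-dimensional normed space $(\real^r, \|\cdot\|)$, hence a lattice, whence $A \cong \integer^r$.

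The main obstacle is precisely this undistortedness estimate $d(x_0, gx_0) \le C\,\|g\|$. Cocompactness alone is insufficient: distorted central subgroups occur in nilmanifolds, where $\|g\|$ can vanish on nontrivial elements, so the no-conjugate-points hypothesis must enter in an essential way. I expect to exploit uniqueness of geodesics to compare, for each $g \in A$, the broken geodesic through the orbit $\{g^n x_0\}$ with the genuine geodesics joining its endpoints, showing that the non-focusing condition rules out the sublinear drift responsible for distortion. This is exactly the step at which a Riemannian regularity assumption could be traded for a purely metric argument, and carrying it out without such assumptions is the principal difficulty.
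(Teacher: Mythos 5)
Your overall architecture is sound, and parts of it genuinely differ from the paper's route: you get torsion-freeness and the rank bound $r \leq d$ from cohomological dimension (contractibility of $\hat{X}$, $\mathrm{cd}(\pi_1(X)) \leq d$, monotonicity under subgroups), whereas the paper gets torsion-freeness from the systole bound $|\gamma|_\infty \geq \mathrm{sys}(X)$ and the dimension bound $k \leq d$ by building an explicit Kirszbraun-type Lipschitz map $f : \hat{X} \to \real^k$ that is $(G,\integer^k)$-equivariant and shown surjective by a degree argument on $\tor^k$, so that a Lipschitz map cannot raise Hausdorff dimension. Your discreteness endgame (embed $A$ in $(\real^r, \|\cdot\|)$, use a uniform lower bound on the norm of nontrivial elements, conclude $A$ is a lattice) is essentially identical to the paper's final step with the map $F : \Gamma \to \rational^k \subset \real^k$. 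But there is a genuine gap, and you have correctly located it yourself: the undistortedness estimate $d(x_0, g x_0) \leq C\|g\|$ (equivalently, a uniform positive lower bound $\|g\| \geq \delta > 0$ for nontrivial $g$, and the fact that the asymptotic seminorm extends to a genuine \emph{norm} on $\real^r$) is never proved. You say "I would show that the orbit is undistorted" and offer only a heuristic about comparing the broken geodesic through $\{g^n x_0\}$ with genuine geodesics to "rule out sublinear drift"; this is not an argument, and it is far from clear it can be made into one---the concatenation of translates of the segment from $\hat{p}$ to $\gamma(\hat{p})$ is not a local geodesic, so uniqueness of geodesics gives no direct comparison, and your sketch never uses commutativity, which must enter somewhere since the needed triangle inequality for $|\cdot|_\infty$ is only available on Abelian subgroups.

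This missing step is precisely the content of Lebedeva's theorem, which the paper imports wholesale: that $|\cdot|_\infty$ is bounded below on $\pi_1(X) \setminus \{e\}$ by $\mathrm{sys}(X)$, satisfies the triangle inequality on Abelian subgroups, and extends to a norm $\|\cdot\|_\infty$ on $\real^k$ (i.e., finitely generated Abelian subgroups are straight). Proving it is the hard analytic core of the subject---Croke--Schroeder needed Busemann functions of axes in the (analytic) Riemannian case, and Lebedeva's length-space argument is substantially more than a broken-geodesic comparison---so your proposal, as written, reduces the theorem to an open-ended problem rather than solving it. Two smaller points to tighten if you pursue this route: the inference from "covering dimension of $\hat{X}$ at most $d$" to $\mathrm{cd}(\pi_1(X)) \leq d$ needs justification for general length spaces (e.g., unique minimizing geodesics give local contractibility, so $X$ is a finite-dimensional compact ANR and Čech cohomology of the aspherical quotient computes group cohomology), and you should note that injectivity of $A \to A \otimes \rational$ uses torsion-freeness, which you have at that point. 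With Lebedeva's theorem cited in place of your conjectural estimate, your cohomological version would be a legitimate alternative proof; without it, the central claim is unsupported.
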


\noindent For nonpositively curved manifolds, this is a consequence of the flat torus theorem of Gromoll--Wolf \cite{GromollWolf1971} and Lawson--Yau \cite{LawsonYau1972}, which was generalized to manifolds with no focal points by O'Sullivan \cite{O'Sullivan1976}.

It was proved by Yau \cite{Yau1971} in the case of nonpositive curvature, and O'Sullivan \cite{O'Sullivan1976} for no focal points, that every solvable subgroup of the fundamental group is a Bieberbach group. Croke--Schroeder \cite{CrokeSchroeder1986} mapped out a way to generalize this to spaces with no conjugate points: If a torsion-free solvable group has the property that its Abelian subgroups are all finitely generated and straight, then it must be a Bieberbach group. Lebedeva \cite{Lebedeva2002} showed that finitely generated Abelian subgroups of the fundamental group of a compact and locally simply connected length space with no conjugate points must be straight. Combining this with Theorem \ref{main theorem} completes the argument set out by Croke--Schroeder.

\begin{theorem}\label{solvable subgroups}
    Each solvable subgroup of $\pi_1(X)$ is a Bieberbach group.
\end{theorem}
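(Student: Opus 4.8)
The plan is to verify the hypotheses of the criterion of Croke--Schroeder \cite{CrokeSchroeder1986} recalled in the introduction: a torsion-free solvable group whose Abelian subgroups are all finitely generated and straight must be a Bieberbach group. Thus, letting $G$ be a solvable subgroup of $\pi_1(X)$, it suffices to check three things---that $G$ is torsion-free, that each Abelian subgroup of $G$ is finitely generated, and that each such subgroup is straight. Because the hard analytic and geometric work has already been carried out in Theorem \ref{main theorem} and by Lebedeva \cite{Lebedeva2002}, the argument is essentially an assembly of these inputs.

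First I would establish that $\pi_1(X)$, and hence every subgroup of it including $G$, is torsion-free. If some $g \in \pi_1(X)$ had finite order $m > 1$, then $\langle g \rangle$ would be a finite, nontrivial Abelian subgroup, which by Theorem \ref{main theorem} must be isomorphic to $\integer^k$ for some $0 \leq k \leq d$. Since $\integer^k$ is torsion-free and is finite only when $k = 0$, this forces $g$ to be trivial, a contradiction. Hence $\pi_1(X)$ is torsion-free.

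Next I would dispatch the two remaining hypotheses at once. Let $A \leq G$ be an arbitrary Abelian subgroup. Then $A$ is an Abelian subgroup of $\pi_1(X)$, so Theorem \ref{main theorem} gives $A \cong \integer^k$ for some $0 \leq k \leq d$; in particular $A$ is finitely generated. The straightness result of Lebedeva \cite{Lebedeva2002} then applies to $A$, so $A$ is straight. As $A$ was arbitrary, every Abelian subgroup of $G$ is both finitely generated and straight, and the Croke--Schroeder criterion concludes that $G$ is a Bieberbach group.

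The only point requiring care---and the one place where the conclusion could fail in the absence of Theorem \ref{main theorem}---is ensuring that Lebedeva's straightness theorem, which is stated for finitely generated Abelian subgroups, in fact applies to \emph{all} Abelian subgroups of $G$. This is precisely what Theorem \ref{main theorem} secures, by ruling out infinitely generated Abelian subgroups of $\pi_1(X)$ altogether. No additional regularity or curvature assumptions enter the argument.
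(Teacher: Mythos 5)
Your proof is correct and follows the same route as the paper: verify the Croke--Schroeder criterion by combining Theorem \ref{main theorem} (finite generation) with Lebedeva's straightness result. The only cosmetic difference is that you derive torsion-freeness from Theorem \ref{main theorem} itself, whereas the paper gets it from the lower bound $| \gamma |_\infty \geq \mathrm{sys}(X)$ on $\pi_1(X) \setminus \{e\}$; both arguments are valid.
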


\noindent This continues the theme, developed in \cite{CrokeSchroeder1986}, \cite{Lebedeva2002}, \cite{IvanovKapovitch2014}, and unpublished work of Kleiner that, at the level of fundamental group, spaces with no conjugate points resemble those with nonpositive curvature.

Since the exponential map at each point of its universal cover is a diffeomorphism, a Riemannian manifold with no conjugate points must be aspherical. It's worth pointing out that this condition isn't enough to guarantee the conclusion of Theorem \ref{main theorem}, as Mess \cite{Mess1990,Luck2012} showed that for each $n \geq 4$ there exists a compact manifold with universal cover $\real^n$ whose fundamental group contains a divisible Abelian subgroup, which cannot be finitely generated.

The second section contains a short proof of Theorem \ref{main theorem}. The third section gives a different proof in the Riemannian setting, based on a property of Riemannian norms satisfied by the asymptotic norm of each Abelian subgroup of the fundamental group.

\section{Proof of Theorem \ref{main theorem}}

Fix $\hat{p} \in \hat{X}$ and a basepoint $p = \pi(\hat{p})$ for $\pi_1(X)$. Overloading notation, each $\gamma \in \pi_1(X)$ will be identified with the corresponding deck transformation of $\hat{X}$. Let $\Gamma$ be an Abelian subgroup of $\pi_1(X)$, in which the group operation is written additively, and suppose $\sigma_1,\ldots,\sigma_k \in \Gamma$ are linearly independent. Denote by $G$ the subgroup generated by the $\sigma_i$. The following are proved in \cite{Lebedeva2002}: On $\pi_1(X)$, the function
\[
    | \gamma |_\infty = \lim_{m \to \infty} \frac{\hat{d}(m\gamma(\hat{p}),\hat{p})}{m}
\]
is positively homogeneous over $\integer$. It is bounded below on $\pi_1(X) \setminus \{ e \}$ by $\mathrm{sys}(X)$, the length of the shortest nontrivial geodesic loop in $X$, so $\pi_1(X)$ is torsion free. Its restriction to $\Gamma$ satisfies the triangle inequality, and, with respect to the isomorphism $G \cong \integer^k$ that takes each $\sigma_i$ to the $i$-th standard basis vector, $| \cdot |_\infty$ extends to a norm $\| \cdot \|_\infty$ on $\real^k$.

Denote by $\| \cdot \|$ the Euclidean norm on $\real^k$. From the identifications $G(\hat{p}) \cong G \cong \integer^k$, $G(\hat{p})$ inherits the coordinate functions $\rho_1,\ldots,\rho_k$ on $\integer^k$. Since $\| \cdot \|_\infty$ is a norm on $\real^k$, there exists $C > 0$ such that
\[
    \frac{1}{C} \| u \|_\infty \leq \| u \| \leq C \| u \|_\infty 
\]
for all $u \in \real^k$. The number $C$ is a Lipschitz constant for the $\rho_i$ on $G(\hat{p})$, and, as in the proof of Kirszbraun's theorem \cite{Kirszbraun1934}, the functions
\[
    f_i(\hat{x}) = \min_{\gamma \in G}[\rho_i(\gamma(\hat{p})) + C \hat{d}(\hat{x},\gamma(\hat{p}))]
\]
are Lipschitz extensions of the $\rho_i$ to $\hat{N}$. Each $f_i$ is $(G,\integer)$-equivariant, in the sense that $f_i(\gamma(\hat{x})) - f_i(\hat{x}) \in \integer$ for all $\hat{x} \in \hat{N}$ and all $\gamma \in G$.

The map $f = (f_1,\ldots,f_k) : \hat{N} \to \real^k$ is Lipschitz, and $f(\gamma(\hat{x})) - f(\hat{x}) \in \integer^k$ for all $\hat{x} \in \hat{N}$ and all $\gamma \in G$. By construction, $f(G(\hat{p})) = \integer^k$. Since $G$ is Abelian, there exists a map $\phi : \tor^k \to X$ such that $\phi_*(\pi_1(\tor^k)) \cong G$. Lift $\phi$ to a map $\hat{\phi} : \real^k \to \hat{N}$. The composition $f \circ \hat{\phi} : \real^k \to \real^k$ descends to a map $\tor^k \to \tor^k$ with surjective induced homomorphism, so by degree theory it must be surjective. Thus $f$ is surjective. Since a Lipschitz map cannot increase Hausdorff dimension, $k \leq d$.

It follows that $\Gamma$ has rank at most $d$. If it has rank zero, then the result is trivial. Without loss of generality, suppose it has rank $k > 0$. For any $\gamma \in \Gamma$, there exist $n,a_1,\ldots,a_k \in \integer$ such that $n\gamma = \sum_{i=1}^k a_i \sigma_i$. It is well known that the function $F : \Gamma \to \rational^k$ defined by $F(e) = (0,\ldots,0)$ and
\[
    F(\gamma) = (a_1/n,\ldots,a_k/n)
\]
for $\gamma \neq e$ is a well-defined and injective homomorphism, so $F$ is an isomorphism onto its image $\Gamma_0$. This map satisfies
\begin{align*}
    \| F(\gamma) \|_\infty &= \| (a_1/n,\ldots,a_k/n) \|_\infty = \frac{1}{|n|} \big\| (a_1,\ldots,a_k) \big\|_\infty\\
        &= \frac{1}{|n|} \big| \sum_{i=1}^k a_i \sigma_i \big|_\infty = \frac{1}{|n|} | n\gamma |_\infty = | \gamma |_\infty
\end{align*}
for any $\gamma \neq e$. For any distinct $q_0,q_1 \in \Gamma_0$, there exist distinct $\gamma_0,\gamma_1 \in \Gamma$ such that $F(\gamma_i) = q_i$ for each $i$. For $c = 1/C$, one has that
\begin{align*}
    \| q_0 - q_1 \| &\geq c\| q_0 - q_1 \|_\infty = c\|F(\gamma_0) - F(\gamma_1)\|_\infty = c\|F(\gamma_0 - \gamma_1)\|_\infty\\
        &= c|\gamma_0 - \gamma_1|_\infty \geq c \cdot \mathrm{sys}(X) > 0\textrm{.}
\end{align*}
Thus $\Gamma_0$ is a discrete subgroup of $\real^k$, and, consequently, $\Gamma \cong \integer^k$.

\section{Busemann functions in the Riemannian setting}

For simplicity, it will be assumed in this section that $X$ is a smooth $d$-dimensional Riemannian manifold, although what follows holds when $X$ is $C^r$ for some $r$ depending on $d$. As before, let $G$ be an Abelian subgroup of $\pi_1(X)$ generated by linearly independent $\gamma_1,\ldots,\gamma_k$. The key step in the proof of Theorem \ref{main theorem} is the construction of a $(G,\integer^k)$-equivariant map $f : \hat{X} \to \real^k$ such that $f(G(\hat{p})) = \integer^k$. When $X$ is Riemannian, another such map may be constructed using a nondegenerate collection of Busemann functions.

An important theorem of Ivanov--Kapovitch \cite{IvanovKapovitch2014} states that, whenever $\alpha_1,\alpha_2 \in \pi_1(X)$ commute, the change in the Busemann functions of axes of $\alpha_2$ under the action of $\alpha_1$ is constant on $\hat{X}$. This was previously proved by Croke--Schroeder \cite{CrokeSchroeder1986} for analytic $X$. Thus one may define a function $B : G \times G \to \real$ by setting $B(\alpha_1,\alpha_2)$ equal to that change.

Because $B(\alpha,\alpha) = | \alpha |_\infty^2$ for all $\alpha \in G$, one might hope to show that $B$ extends to an inner product and, consequently, that $\| \cdot \|_\infty$ is Riemannian. In fact, $B$ satisfies a number of the properties of an inner product: It is linear over $\integer$ in the first slot (see Corollary 4.2 of \cite{IvanovKapovitch2014}), $B(\alpha_1,n\alpha_2) = nB(\alpha_1,\alpha_2)$ for all $n \in \integer$, and it satisfies a version of the Cauchy--Schwarz inequality,
\begin{equation}\label{cauchy--schwarz}
    |B(\alpha_1,\alpha_2)| \leq | \alpha_1 |_\infty | \alpha_2 |_\infty\textrm{,}
\end{equation}
with equality if and only if $\alpha_1$ and $\alpha_2$ are rationally related. It follows that $B$ extends to an inner product if and only if it is symmetric, but it's far from clear that symmetry holds in general (cf. \cite{BuragoIvanov1994}). Regardless, $B$ also resembles an inner product in the following way.

\begin{lemma}\label{main lemma}
    For each $1 \leq m \leq k$, there exist $\alpha_1,\ldots,\alpha_m \in \mathrm{span}\,\{\gamma_1,\ldots,\gamma_m\}$ such that the $m \times m$ matrix $[B(\alpha_i,\alpha_j)]$ is nonsingular.
\end{lemma}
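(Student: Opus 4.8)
The plan is to realize $B$ through supporting functionals of the norm and to reduce the lemma to a statement about linear independence of those functionals. Fix $1 \le m \le k$, and work in $V = \mathrm{span}\,\{\gamma_1,\ldots,\gamma_m\} \cong \real^m$ with the lattice $\Lambda = \langle \gamma_1,\ldots,\gamma_m\rangle$ and the norm $|\cdot|_\infty$. For each $\alpha \in \Lambda$, the map $v \mapsto B(v,\alpha)$ is $\integer$-linear in the first slot, so it extends uniquely to a linear functional $L_\alpha$ on $V$. The Cauchy--Schwarz inequality \eqref{cauchy--schwarz} gives $|L_\alpha(v)| \le |v|_\infty |\alpha|_\infty$ for $v \in \Lambda$, hence for all $v \in V$ by continuity, while $L_\alpha(\alpha) = B(\alpha,\alpha) = |\alpha|_\infty^2$. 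Thus $L_\alpha$ is a supporting functional of $|\cdot|_\infty$ at $\alpha$ whose dual norm is $|\alpha|_\infty$, and $L_{n\alpha} = nL_\alpha$ for all $n \in \integer$.

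Next I would exploit that $B(\alpha_i,\alpha_j) = L_{\alpha_j}(\alpha_i)$ is linear in the first index. Choosing a basis of $V$ and the dual basis of $V^\ast$, the matrix $[B(\alpha_i,\alpha_j)]$ is the product of the matrix whose rows are the coordinates of $\alpha_1,\ldots,\alpha_m$ and the matrix whose columns are the coordinates of $L_{\alpha_1},\ldots,L_{\alpha_m}$. Its determinant is accordingly the product of the two determinants, so the lemma will follow once I produce $\alpha_1,\ldots,\alpha_m \in \Lambda$ that are linearly independent in $V$ and whose functionals $L_{\alpha_1},\ldots,L_{\alpha_m}$ are linearly independent in $V^\ast$.

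These vectors I would build one at a time, the whole construction resting on the following claim: for any proper subspaces $S \subsetneq V$ and $T \subsetneq V^\ast$, there exists $\beta \in \Lambda$ with $\beta \notin S$ and $L_\beta \notin T$. Granting the claim, if $\beta_1,\ldots,\beta_r$ have already been chosen linearly independent with $L_{\beta_1},\ldots,L_{\beta_r}$ linearly independent, then applying it to $S = \mathrm{span}\,\{\beta_1,\ldots,\beta_r\}$ and $T = \mathrm{span}\,\{L_{\beta_1},\ldots,L_{\beta_r}\}$ yields a vector $\beta_{r+1}$ extending both independent families; starting the induction with $\beta_1$ equal to any nonzero element of $\Lambda$, this produces the desired $\alpha_1,\ldots,\alpha_m$ after $m$ steps.

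The claim is the heart of the argument and is what lets me avoid any appeal to differentiability of $|\cdot|_\infty$. I would argue by contradiction: if it failed, every $\beta \in \Lambda \setminus S$ would satisfy $L_\beta \in T$. Since the rays through $\Lambda \setminus S$ are dense in the unit sphere of $(V,|\cdot|_\infty)$, for a given unit vector $u$ I could pick $\beta_n \in \Lambda \setminus S$ with $\beta_n/|\beta_n|_\infty \to u$; the normalized functionals $L_{\beta_n}/|\beta_n|_\infty$ lie in the compact dual unit ball and in the closed subspace $T$, so a subsequence converges to some $\ell \in T$, and the contact identity $L_{\beta_n}(\beta_n) = |\beta_n|_\infty^2$ forces $\ell(u) = 1$. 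Every unit vector would then admit a supporting functional lying in $T$; taking $u = w/|w|_\infty$ for a nonzero $w$ annihilated by $T$ gives an $\ell \in T$ with $\ell(w) = |w|_\infty \ne 0$, a contradiction. I expect this compactness step --- extracting a limiting supporting functional and pinning down its value from the contact identity --- to be the main obstacle, precisely because the possible lack of symmetry and smoothness of $B$ rules out choosing supporting functionals continuously.
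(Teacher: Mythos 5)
Your proposal is correct, but it proves the lemma by a genuinely different route than the paper. The paper proceeds by induction on $m$: it starts with $\alpha_1 = \gamma_1$, and if appending $\gamma_{m+1}$ produces a singular matrix, it keeps $\alpha_1,\ldots,\alpha_m$ fixed and replaces only the last element by an integer vector in a solid cone around the null direction $c$ (Lemma \ref{solid cone}), proving nonsingularity by contradiction via rational approximation of $c$, the word-norm comparison constant $D$, and the quantitative matrix-perturbation fact of Lemma \ref{linear algebra lemma}. You instead make the hidden linear-algebraic structure explicit: since $B$ is additive over $\integer$ in the first slot, each $v \mapsto B(v,\alpha)$ extends to a functional $L_\alpha \in V^\ast$ with dual norm exactly $|\alpha|_\infty$ (by \eqref{cauchy--schwarz} and the contact identity $B(\alpha,\alpha) = |\alpha|_\infty^2$), the matrix $[B(\alpha_i,\alpha_j)]$ factors as a product, and nonsingularity reduces to choosing lattice vectors that are simultaneously independent in $V$ with functionals independent in $V^\ast$; your avoidance claim for a pair of proper subspaces $S \subsetneq V$, $T \subsetneq V^\ast$ then does all the work via density of lattice directions off $S$ and compactness of the dual unit ball. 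The steps you leave implicit are routine: extending \eqref{cauchy--schwarz} from $\Lambda$ to $V$ uses $\integer$-homogeneity plus continuity, and rational directions avoiding a proper subspace are indeed dense in the unit sphere. Notably, your argument uses only first-slot additivity, the inequality in \eqref{cauchy--schwarz} (not its equality case), and the contact identity --- it dispenses entirely with the word-norm equivalence and the perturbation lemma --- so it is softer and arguably more conceptual, exhibiting the lemma as a general fact about norms equipped with an equivariant supporting-functional assignment on a lattice. What the paper's harder route buys is extra structure: Lemma \ref{solid cone} shows the previously chosen $\alpha_1,\ldots,\alpha_m$ need not be disturbed and that the new element may be taken to be any integer point in an explicit cone, flexibility your simultaneous construction does not provide.
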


\noindent If $\alpha_1,\ldots,\alpha_k$ are as in Lemma \ref{main lemma} and $b_1,\ldots,b_k$ are Busemann functions of respective axes, then up to composition with an affine isomorphism the map $F = (b_1,\ldots,b_k) : \hat{X} \to \real^k$ is $(G,\integer^k)$-equivariant and satisfies $F(G(\hat{p})) = \integer^k$. The Riemannian version of Theorem \ref{main theorem} follows.

The proof of Lemma \ref{main lemma} is by induction. When $m = 1$, the conclusion holds with $\alpha_1 = \gamma_1$. Suppose the conclusion holds for some $1 \leq m < k$. If the conclusion fails when $\alpha_{m+1} = \gamma_{m+1}$, then there exists a nonzero $c = (c_1,\ldots,c_{m+1})$ in the null space of the $(m+1) \times (m+1)$ matrix $[B(\alpha_j,\alpha_i)]$. The following lemma then completes the inductive step.

\begin{lemma}\label{solid cone}
    There exists a solid cone $C$ centered around the ray $\{ rc \st r \geq 0 \}$ such that, if $x = (x_1,\ldots,x_{m+1}) \in C \cap \integer^{m+1}$, $\tilde{\alpha}_i = \alpha_i$ for $1 \leq i \leq m$, and $\tilde{\alpha}_{m+1} = \sum_{i=1}^{m+1} x_i \alpha_i$, then the $(m+1) \times (m+1)$ matrix $[B(\tilde{\alpha}_i,\tilde{\alpha}_j)]$ is nonsingular.
\end{lemma}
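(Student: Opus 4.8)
The plan is to expand $\det[B(\tilde\alpha_i,\tilde\alpha_j)]$ by a Schur complement and reduce the entire statement to showing that a single scalar remains positive throughout a sufficiently narrow cone about the ray. Write $M_0=[B(\alpha_i,\alpha_j)]_{i,j=1}^m$, which is nonsingular by the inductive hypothesis, and for $x\in\integer^{m+1}$ set $\tilde\alpha_{m+1}=\sum_{i=1}^{m+1}x_i\alpha_i$. Since $\tilde\alpha_i=\alpha_i$ for $i\le m$, the matrix $[B(\tilde\alpha_i,\tilde\alpha_j)]$ has block form with top-left block $M_0$, last column $u(x)$ given by $u(x)_i=B(\alpha_i,\tilde\alpha_{m+1})$, last row $w(x)^\top$ given by $w(x)_j=B(\tilde\alpha_{m+1},\alpha_j)$, and bottom-right entry $B(\tilde\alpha_{m+1},\tilde\alpha_{m+1})=|\tilde\alpha_{m+1}|_\infty^2$. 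Taking the Schur complement of $M_0$ gives
\[
    \det[B(\tilde\alpha_i,\tilde\alpha_j)]=\det(M_0)\big(|\tilde\alpha_{m+1}|_\infty^2-w(x)^\top M_0^{-1}u(x)\big)\textrm{,}
\]
so, since $\det(M_0)\ne 0$, it suffices to show the scalar $S(x)=|\tilde\alpha_{m+1}|_\infty^2-w(x)^\top M_0^{-1}u(x)$ is nonzero for every nonzero $x$ whose direction lies in a small enough cone about $c$.

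I would first exploit the fact that $B$ is linear over $\integer$ in its first slot, which makes the last row well behaved: $w(x)_j=\sum_{l=1}^{m+1}x_l B(\alpha_l,\alpha_j)$ is the restriction to $\integer^{m+1}$ of a fixed linear map $x\mapsto Lx$, where $L_{jl}=B(\alpha_l,\alpha_j)$. The defining property of $c$ is precisely that $\sum_l c_l B(\alpha_l,\alpha_j)=0$ for $1\le j\le m$, so $Lc=0$; here one checks $c_{m+1}\ne 0$, since otherwise the nonsingularity of $M_0$ would force $c=0$. Because $L$ annihilates the central direction, $\|w(x)\|=\|Lx\|\le\|L\|\,\mathrm{dist}(x,\real c)$, which is at most $\|L\|\,\theta\,\|x\|$ once the direction of $x$ lies within angle $\theta$ of the ray.

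The main obstacle is that $B$ is not known to be linear—or even continuous—in its second slot (this is exactly the open symmetry question), so $u(x)$ cannot be controlled by expanding it linearly. The key point is that it need not be: the Cauchy--Schwarz inequality \eqref{cauchy--schwarz} gives $|u(x)_i|=|B(\alpha_i,\tilde\alpha_{m+1})|\le|\alpha_i|_\infty\,|\tilde\alpha_{m+1}|_\infty$, whence $\|u(x)\|\le C_1\|x\|$ for a constant $C_1$ depending only on the $|\alpha_i|_\infty$ and on the equivalence of $|\cdot|_\infty$ and $\|\cdot\|$. This is a purely quadratic bound requiring no second-slot regularity whatsoever. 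Combining the two estimates, $|w(x)^\top M_0^{-1}u(x)|\le K\theta\|x\|^2$ for a constant $K$ depending only on $\|L\|$, $\|M_0^{-1}\|$, and $C_1$, whereas $|\tilde\alpha_{m+1}|_\infty^2\ge c_0^2\|x\|^2$ by equivalence of norms.

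Finally, I would choose the half-angle $\theta$ of the solid cone $C$ about the ray $\{rc\st r\ge 0\}$ so small that $K\theta<c_0^2$, which yields $S(x)\ge(c_0^2-K\theta)\|x\|^2>0$ for every nonzero $x$ with direction in $C$, and therefore $\det[B(\tilde\alpha_i,\tilde\alpha_j)]\ne 0$ for every such $x\in C\cap\integer^{m+1}$. The whole argument thus reduces to a quantitative comparison of a genuinely quadratic term against a cross term that is small precisely because the first-slot-linear row vanishes along the ray while the ill-behaved second-slot column is tamed by Cauchy--Schwarz; the delicate feature worth emphasizing is that this sidesteps any appeal to symmetry or second-slot linearity of $B$.
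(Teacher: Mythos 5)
Your proof is correct, and while it runs on the same three estimates as the paper's, it concludes by a genuinely different mechanism. The inputs you isolate are exactly the paper's: first-slot $\integer$-linearity makes the last row $w(x) = Lx$ a fixed linear map annihilating $c$, so $\|w(x)\| \leq \|L\|\theta\|x\|$ on the cone (this is the paper's bound $\|\tilde{b}_\ell\| \leq K\varepsilon_\ell\sqrt{m+1}$ in disguise); the Cauchy--Schwarz inequality \eqref{cauchy--schwarz} tames the last column, whose second slot has no known linearity or continuity (the paper's $\|\tilde{a}_\ell\| \leq D\sqrt{mK}$); and $B(\tilde{\alpha}_{m+1},\tilde{\alpha}_{m+1}) = |\tilde{\alpha}_{m+1}|_\infty^2 \geq c_0^2\|x\|^2$ supplies the genuinely quadratic corner term (the paper's $\tilde{c}_\ell \geq 1/(2D^2)$). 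Where you diverge is the endgame: the paper argues by contradiction, approximating $c$ by rationals $p_i^\ell/q_i^\ell$, rescaling the rows and columns of the putatively singular matrices $M_\ell$, and invoking the sequential perturbation Lemma \ref{linear algebra lemma}; you instead apply the asymmetric Schur complement identity $\det[B(\tilde{\alpha}_i,\tilde{\alpha}_j)] = \det(M_0)\bigl(B(\tilde{\alpha}_{m+1},\tilde{\alpha}_{m+1}) - w(x)^\top M_0^{-1}u(x)\bigr)$, which needs no symmetry of $B$, and reduce the lemma to the single scalar inequality $S(x) \geq (c_0^2 - K\theta)\|x\|^2 > 0$. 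Your route buys a direct, quantitative statement --- an explicit admissible half-angle $\theta < c_0^2/K$, no rational-approximation bookkeeping, and no auxiliary linear algebra lemma --- and it handles every nonzero integer point of the cone uniformly, whereas the paper's contradiction argument is phrased through points of the special product form $\bigl((\prod_{j \neq i} q_j^\ell)p_i^\ell\bigr)_i$. Two small points you should make explicit: the two-sided comparison $c_0\|x\| \leq |\tilde{\alpha}_{m+1}|_\infty \leq C'\|x\|$ requires that $\alpha_1,\ldots,\alpha_{m+1}$ be linearly independent, so that Lebedeva's extension of $|\cdot|_\infty$ to a norm on $\real^{m+1}$ applies --- as the paper notes, this follows from the inductive hypothesis and first-slot linearity, by the same use of the nonsingularity of $M_0$ as your check that $c_{m+1} \neq 0$; and the apex $x = 0$ must be excluded (there the last row and column vanish identically), which your restriction to nonzero $x$ already respects and which is implicit in the paper's statement as well.
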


\noindent The proof of Lemma \ref{solid cone} uses the following elementary fact.

\begin{lemma}\label{linear algebra lemma}
    Let $A,C > 0$. Suppose $M_\ell$ is a sequence of $(p+1) \times q$ matrices of the form
    \[
        \left[ \begin{array}{c} M \\ b_\ell \end{array} \right]
    \]
    for a fixed $p \times q$ matrix $M$ and a sequence $b_\ell \in \real^q$ such that $\| b_\ell \| \to 0$. Suppose also that $w_\ell$ is a sequence of vectors in $\real^{p+1}$ of the form
    \[
        \left[ \begin{array}{c} a_\ell \\ C_\ell \end{array} \right]
    \]
    for $a_\ell \in \real^p$ satisfying $\| a_\ell \| \leq A$ and $|C_\ell| \geq C$. If $v_\ell \in \real^q$ satisfy $M_\ell v_\ell = w_\ell$, then $\|M(v_\ell/\|v_\ell\|)\| \to 0$. Consequently, $M$ has nontrivial null space.
\end{lemma}

\begin{proof}[Proof of Lemma \ref{solid cone}]
    Without loss of generality, one may suppose that $\max |c_i| = 1$. Assume for the sake of contradiction that the result is false. Then, for each $i$ and any fixed sequence $\varepsilon_\ell \searrow 0$, there exists a sequence of rational numbers $p_i^\ell/q_i^\ell$ such that $|c_i - p_i^\ell/q_i^\ell| < \varepsilon_\ell$ and, when $\tilde{\alpha}_i^\ell = \alpha_i$ for $1 \leq i \leq m$ and $\tilde{\alpha}_{m+1}^\ell = \sum_{i=1}^{m+1} (\prod_{j \neq i} q_j^\ell) p_i^\ell \alpha_i$, each $(m+1) \times (m+1)$ matrix $M _\ell = [B(\tilde{\alpha}_i^\ell,\tilde{\alpha}_j^\ell)]$ is singular.

    Let $W = [B(\alpha_j,\alpha_i)]$ for $1 \leq i,j \leq m + 1$, and write
    \begin{equation}\label{equation1}
    \begin{aligned}
        w_\ell &= W \big( (\prod_{j \neq 1} q_j^\ell)p_1^\ell,\ldots,(\prod_{j \neq m+1} q_j^\ell) p_{m+1}^\ell \big)\\
        &= \big( \sum_{i=1}^{m+1} (\prod_{j \neq i} q_j^\ell) p_i^\ell B(\alpha_i,\alpha_1),\ldots,\sum_{i=1}^{m+1} (\prod_{j \neq i} q_j^\ell) p_i^\ell B(\alpha_i,\alpha_{m+1}) \big)\\
        &= (B(\tilde{\alpha}_{m+1}^\ell,\tilde{\alpha}_1^\ell),\ldots,B(\tilde{\alpha}_{m+1}^\ell,\tilde{\alpha}_m^\ell),B(\tilde{\alpha}_{m+1}^\ell,\alpha_{m+1}))\textrm{.}
    \end{aligned}
    \end{equation}
    Let $K = \displaystyle \max_{1 \leq i,j \leq m+1} |B(\alpha_i,\alpha_j)|$. Then
    \begin{equation}\label{equation2}
    \begin{aligned}
        \| w_\ell \| &= \| (\prod_j q_j^\ell) W(p_1^\ell/q_1^\ell,\ldots,p_{m+1}^\ell/q_{m+1}^\ell) \|\\
            &\leq | \prod_j q_j^\ell | K\varepsilon_\ell \sqrt{m+1}\textrm{.}
    \end{aligned}
    \end{equation}
    The inductive hypothesis and the linearity of $B$ in the first slot imply that $\alpha_1,\ldots,\alpha_{m+1}$ are linearly independent. The word norm of $\tilde{\alpha}_{m+1}^\ell$ with respect to the subgroup of $H$ generated by $\alpha_1,\ldots,\alpha_{m+1}$ is
    \[
        | \tilde{\alpha}_{m+1}^\ell |_{\mathrm{word}} = \sum_{i=1}^{m+1} |\prod_{j \neq i} q_j^\ell||p_i^\ell|\textrm{.}
    \]
    Because the corresponding norms on $\real^{m+1}$ are equivalent, there exists $D > 0$, depending only on $\alpha_1,\ldots,\alpha_{m+1}$, such that
    \[
        \frac{1}{D} \sum_{i=1}^{m+1} |\prod_{j \neq i} q_j^\ell||p_i^\ell| \leq | \tilde{\alpha}_{m+1}^\ell |_\infty \leq D \sum_{i=1}^{m+1} |\prod_{j \neq i} q_j^\ell||p_i^\ell|\textrm{.}
    \]
    By the Cauchy--Schwarz inequality \eqref{cauchy--schwarz}, for each $1 \leq i \leq m$,
    \begin{equation}\label{equation3}
        |B(\tilde{\alpha}_i^\ell,\tilde{\alpha}_{m+1}^\ell)| \leq | \tilde{\alpha}_i^\ell |_\infty | \tilde{\alpha}_{m+1}^\ell |_\infty \leq D\sqrt{K} \sum_{i=1}^{m+1} |\prod_{j \neq i} q_j^\ell||p_i^\ell|\textrm{.}
    \end{equation}
    Similarly,
    \begin{equation}\label{equation4}
        B(\tilde{\alpha}_{m+1}^\ell,\tilde{\alpha}_{m+1}^\ell) = | \tilde{\alpha}_{m+1}^\ell |_\infty^2 \geq (1/D^2)[\sum_{i=1}^{m+1} |\prod_{j \neq i} q_j^\ell||p_i^\ell|]^2\textrm{.}
    \end{equation}
    Let $a_\ell = (B(\tilde{\alpha}_1^\ell,\tilde{\alpha}_{m+1}^\ell),\ldots,B(\tilde{\alpha}_m^\ell,\tilde{\alpha}_{m+1}^\ell))$, $b_\ell = (B(\tilde{\alpha}_{m+1}^\ell,\tilde{\alpha}_1^\ell),\ldots,B(\tilde{\alpha}_{m+1}^\ell,\tilde{\alpha}_m^\ell))$, $c_\ell = B(\tilde{\alpha}_{m+1}^\ell,\tilde{\alpha}_{m+1}^\ell)$, and $M = [B(\alpha_i,\alpha_j)]$ for $1 \leq i,j \leq m$. Write
    \[
        \tilde{a}_\ell = a_\ell/[\sum_{i=1}^{m+1}|\prod_{j \neq i} q_j^\ell||p_i^\ell|]\textrm{,}
    \]
    \[
        \tilde{b}_\ell = b_\ell/|\prod_j q_j^\ell|\textrm{,}
    \]
    and
    \[
        \tilde{c}_\ell = c_\ell/[|\prod_j q_j^\ell| \sum_{i=1}^{m+1}|\prod_{j \neq i} q_j^\ell||p_i^\ell|]\textrm{.}
    \]
     By \eqref{equation1} and \eqref{equation2}, $\| \tilde{b}_\ell \| \leq \| w_\ell \|/|\prod_j q_j^\ell| \leq K\varepsilon_\ell \sqrt{m+1}$; by \eqref{equation3}, $\| \tilde{a}_\ell \| \leq D\sqrt{mK}$; and, by \eqref{equation4}, $\tilde{c}_\ell \geq 1/(2D^2)$ for all sufficiently large $\ell$. Since $M$ is nonsingular, it follows from Lemma \ref{linear algebra lemma} that the matrices
    \[
        \left[ \begin{array}{cc} M & \tilde{a}_\ell \\ \tilde{b}_\ell & \tilde{c}_\ell \end{array} \right]
    \]
    are nonsingular for all such $\ell$. The corresponding $M_\ell$ must also be nonsingular, which is a contradiction.
\end{proof}

When $m = 2$ in Lemma \ref{main lemma}, inequality \eqref{cauchy--schwarz} implies that one may take $\alpha_1 = \gamma_1$ and $\alpha_2 = \gamma_2$. When $X$ has no focal points, one may, by the flat torus theorem, take $\alpha_i = \gamma_i$ for all $i$. However, in the general case for $m \geq 3$, there is no apparent local structure that forces the Busemann functions of the axes of the $\gamma_i$ to have linearly independent gradients, and it is not clear that the conclusion of Lemma \ref{main lemma} holds with $\alpha_i = \gamma_i$ for all $i$.

\begin{question}
    Must the $k \times k$ matrix $[B(\gamma_i,\gamma_j)]$ be nonsingular?
\end{question}

\bibliographystyle{amsplain}
\bibliography{bibliography}

\end{document}